\documentclass[12pt]{article}

\usepackage{amsmath, amssymb, amsthm}
\usepackage[margin=1in]{geometry}
\usepackage{array}
\usepackage{hyperref}
\newtheorem{theorem}{Theorem}
\newtheorem{lemma}{Lemma}
\newtheorem{corollary}{Corollary}
\title{Intersection Arrays of Completely Regular Codes of Covering Radius One in Generalized Petersen Graphs}
\author{
Hamed Karami\thanks{Corresponding author.}\\
Department of Mathematics and Statistics, Georgia State University\\
\texttt{hkarami@student.gsu.edu}
}
\date{}
\begin{document}

\maketitle

\begin{abstract}
We determine all possible intersection arrays of completely regular codes of
covering radius one in the generalized Petersen graphs \(GP(n,k)\), where
\(n\geq 3\) and \(1\leq k<n/2\). In the equivalent language of perfect
colorings, this amounts to enumerating all quotient matrices of perfect
\(2\)-colorings, up to interchanging the two colors. Since \(GP(n,k)\) is cubic,
there are only six possible nontrivial quotient matrices. For each of them, we
give necessary and sufficient arithmetic conditions on \(n\) and \(k\) for its
existence. The feasible cases are realized by explicit periodic colorings. The
nonexistence part is obtained by reducing the local coloring conditions to
cyclic systems of linear equations and applying a Fourier argument on roots of
unity. Together with the previously known cases \(GP(n,2)\) and \(GP(n,3)\),
the results give a complete arithmetic classification of quotient matrices, and
hence of covering-radius-one completely regular code parameters, in the
generalized Petersen family.
\end{abstract}

\medskip
\noindent\textbf{Keywords:} completely regular code; perfect coloring;
equitable partition; quotient matrix; generalized Petersen graph.

\medskip
\noindent\textbf{2020 Mathematics Subject Classification:}
05C15; 05C50; 94B25.

\section{Introduction}\label{sec:introduction}

Completely regular codes were introduced by Delsarte in the study of
association schemes and coding theory. Let \(G\) be a connected graph and let
\(C\subseteq V(G)\). The distance partition of \(V(G)\) with respect to \(C\)
is called equitable if every vertex in one part has a fixed number of
neighbours in every other part. In this case \(C\) is called a completely
regular code. Thus, when the covering radius is one, \(C\) and its complement
form a perfect \(2\)-coloring of \(G\). Conversely, in a connected regular
graph, every nontrivial perfect \(2\)-coloring gives completely regular codes
of covering radius one, one for each color class. Hence the problem considered
in this paper may be stated either in the language of completely regular codes
or in the language of perfect colorings.

We shall use the latter language in the proofs. A perfect coloring is also
called an equitable partition or a partition design. If
\(T:V(G)\to\{1,\ldots,m\}\) is a perfect coloring, then there is a matrix
\(S=(s_{ij})\) such that every vertex of color \(i\) has exactly \(s_{ij}\)
neighbours of color \(j\). The matrix \(S\) is called the quotient matrix
(or the parameter matrix) of the coloring. The enumeration of such matrices is
a standard way to study completely regular codes in families of regular graphs.

Perfect colorings and completely regular codes have been considered in many
classical graphs. For Hamming graphs and hypercubes see
\cite{FonDerFlaass2007Hypercube,Potapov2012,
BespalovKrotovMatiushevTaranenkoVorobev2021}; for Johnson graphs see
\cite{AvgustinovichMogilnykh2008,AvgustinovichMogilnykh2011,
Mogilnykh2007,GavrilyukGoryainov2013}. They are also closely related to
orthogonal arrays, correlation-immune functions, bent functions, and
combinatorial designs; see
\cite{Delsarte1973,Krotov2011,PotapovAvgustinovich2020,KrotovPotapov2025}.
General facts on equitable partitions and completely regular codes may be
found in
\cite{Neumaier1992,BrouwerCohenNeumaier1989,GodsilRoyle2001,
BorgesRifaZinoviev2019,KrotovPotapov2025}.

In this paper we consider the generalized Petersen graph \(GP(n,k)\). This
graph has vertex set
\[
   V(GP(n,k)) = \{a_i,b_i: i\in \mathbb Z_n\}
\]
and edge set
\[
   E(GP(n,k)) = \{a_i a_{i+1},\, a_i b_i,\, b_i b_{i+k}: i\in \mathbb Z_n\},
\]
where the indices are read modulo \(n\). We assume throughout that
\(n\geq 3\) and \(1\leq k<n/2\). The graph \(GP(n,k)\) is connected, since the outer vertices form an
\(n\)-cycle and each inner vertex \(b_i\) is adjacent to \(a_i\). The graphs \(GP(n,k)\) form a well-known
family of cubic graphs. They go back to Coxeter and Watkins, and their
automorphism groups, Hamilton cycles, and isomorphism classes have been
studied in
\cite{Coxeter1950,Watkins1969,FruchtGraverWatkins1971,
Alspach1983,SteimleStaton2009}.

Perfect colorings of generalized Petersen graphs were first considered in
some special cases. The perfect \(2\)-colorings of \(GP(n,2)\) were enumerated
in \cite{AlaeiyanKarami2016GP}, and the case \(GP(n,3)\) was treated in
\cite{Karami2022GPn3}. Some related perfect colorings of small generalized
Petersen graphs and other cubic graphs were considered in
\cite{AlaeiyanKaramiSiasat2018GP3,AlaeiyanKarami2017Platonic}. The purpose of
the present paper is to complete the list for arbitrary \(k\).

Since \(GP(n,k)\) is cubic, every quotient matrix of a perfect \(2\)-coloring
has row sums equal to \(3\). Up to interchanging the two colors, there are only
six possible nontrivial matrices. We determine, for each of them, necessary
and sufficient arithmetic conditions on \(n\) and \(k\). The existence part is
given by explicit periodic colorings. For the nonexistence part, the local
conditions imposed by the quotient matrix are transformed into cyclic systems
of linear equations. A Fourier argument on the cyclic group then gives the
needed divisibility restrictions.

The paper is organized as follows. In Section~\ref{sec:preliminaries}, we
give the definitions and list the six possible matrices. Section~\ref{sec:A2}
is devoted to the matrix \(A_2\). Sections~\ref{sec:A3} and~\ref{sec:A4}
deal with \(A_3\) and \(A_4\). Section~\ref{sec:A5} treats \(A_5\). The
known matrices \(A_1\) and \(A_6\), together with the results proved here,
give the complete classification.

\section{Preliminaries and main results}\label{sec:preliminaries}

 Since \(GP(n,k)\) is cubic, every quotient matrix of a perfect
\(2\)-coloring has row sums equal to \(3\). Since the coloring is nontrivial and \(GP(n,k)\) is connected, both
off-diagonal entries of the quotient matrix are positive. Thus the unordered
pair of off-diagonal entries is one of
\[
   \{1,1\},\{1,2\},\{1,3\},\{2,2\},\{2,3\},\{3,3\},
\]
which gives the following six matrices.
\begin{equation}\label{eq:six-matrices}
\begin{array}{lll}
A_1=\begin{pmatrix}2&1\\[2pt]1&2\end{pmatrix},&
A_2=\begin{pmatrix}2&1\\[2pt]2&1\end{pmatrix},&
A_3=\begin{pmatrix}1&2\\[2pt]2&1\end{pmatrix},\\[12pt]
A_4=\begin{pmatrix}0&3\\[2pt]1&2\end{pmatrix},&
A_5=\begin{pmatrix}0&3\\[2pt]2&1\end{pmatrix},&
A_6=\begin{pmatrix}0&3\\[2pt]3&0\end{pmatrix}.
\end{array}
\end{equation}

We shall use the following elementary cases. The matrix \(A_1\) occurs for
every \(GP(n,k)\): color all vertices \(a_i\) with one color and all vertices
\(b_i\) with the other color. The matrix \(A_6\) occurs precisely when
\(GP(n,k)\) is bipartite. This is equivalent to \(n\) being even and \(k\) being
odd. These two cases also follow from the general observations in
\cite{AlaeiyanKarami2016GP}.

\begin{theorem}[Main classification]\label{thm:main-classification}
Let \(n\geq 3\) and \(1\leq k<n/2\). With \(A_1,\ldots,A_6\) as in
\eqref{eq:six-matrices}, the quotient matrices of perfect \(2\)-colorings of
\(GP(n,k)\), up to interchanging the two colors, occur exactly as follows:
\begin{enumerate}
\item \(A_1\) occurs for all \(n,k\) satisfying the above assumptions;
\item \(A_2\) occurs if and only if \(3\mid n\) and \(3\nmid k\);
\item \(A_3\) occurs if and only if \(2\mid n\) and \(k\) is odd;
\item \(A_4\) occurs if and only if \(4\mid n\) and \(k\) is odd;
\item \(A_5\) occurs if and only if \(5\mid n\) and \(k\equiv \pm2\pmod 5\);
\item \(A_6\) occurs if and only if \(2\mid n\) and \(k\) is odd.
\end{enumerate}
\end{theorem}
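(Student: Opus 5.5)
Items (1) and (6) are the elementary cases already discussed before the statement, so the plan concentrates on $A_2,\dots,A_5$. For each matrix I will write the coloring as $0/1$ indicator sequences $x_i=T(a_i)$ and $y_i=T(b_i)$ on $\mathbb Z_n$, with color $1$ the first row of the matrix. The local conditions then become linear identities that hold at every $i$. Concretely, for a matrix $\begin{pmatrix}s_{11}&s_{12}\\ s_{21}&s_{22}\end{pmatrix}$, the number of color-$1$ neighbours of a vertex is $s_{21}+(s_{11}-s_{21})\cdot[\text{vertex has color }1]$. Writing $\lambda=s_{11}-s_{21}$ (the nontrivial eigenvalue) and $\mu=s_{21}$, this gives
\[
x_{i-1}+x_{i+1}+y_i=\mu+\lambda x_i,\qquad x_i+y_{i-k}+y_{i+k}=\mu+\lambda y_i .
\]
The values of $\lambda$ are $0$ for $A_2$, $-1$ for $A_3$, $-1$ for $A_4$, $-2$ for $A_5$ and $-3$ for $A_6$.

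For existence I will exhibit periodic colorings, so that the two identities only need checking over one period.
\begin{itemize}
\item $A_2$: take $x_i=[i\equiv 0\pmod 3]$ and $y_i=[i\equiv 0\pmod 3]$. This needs $3\nmid k$, so that $b_i$'s inner neighbours $b_{i\pm k}$ lie in classes $\pm k\not\equiv 0$.
\item $A_3$: take $x_i=[i \text{ even}]$ and $y_i=[i\text{ odd}]$, with $k$ odd and $n$ even.
\item $A_4$: take $x_i=[i\equiv 0\pmod 4]$ and $y_i=[i\equiv 2\pmod 4]$, with $k$ odd and $4\mid n$. I will check that this is independent and that every other vertex has exactly one color-$1$ neighbour.
\item $A_5$: take $x_i=[i\equiv 0\pmod 5]$, together with $y_i=[i\equiv 2 \text{ or } 3\pmod 5]$ or a shifted version, depending on $k\equiv\pm2\pmod 5$. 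This is the Petersen-graph pattern lifted along the cover $GP(n,k)\to GP(5,2)$.
\end{itemize}
Each of these is a finite residue check.

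For nonexistence I will use the Fourier transform on $\mathbb Z_n$. Let $\hat x(\omega)=\sum_i x_i\omega^{-i}$ for $\omega^n=1$. At $\omega\neq 1$ the constant $\mu$ drops out, leaving
\[
(\omega+\omega^{-1}-\lambda)\hat x+\hat y=0,\qquad \hat x+(\omega^k+\omega^{-k}-\lambda)\hat y=0 .
\]
A nonconstant coloring must have some $\omega\ne1$ with $(\hat x,\hat y)\neq 0$. This forces
\[
(2\cos\theta-\lambda)(2\cos k\theta-\lambda)=1,\qquad \omega=e^{i\theta}.
\]
The frequency $\omega=1$ only fixes the densities. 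The core step is therefore to solve this trigonometric equation over roots of unity for each value of $\lambda$.
\begin{itemize}
\item $\lambda=0$: the equation is $4\cos\theta\cos k\theta=1$, i.e. $(\omega+\bar\omega)(\omega^k+\bar\omega^k)=1$. This is a vanishing sum of roots of unity, $\omega^{k+1}+\omega^{-k-1}+\omega^{k-1}+\omega^{1-k}-1=0$.
\item $\lambda=-1$: the equation is $(1+2\cos\theta)(1+2\cos k\theta)=1$.
\item $\lambda=-2$: the equation is $(2+2\cos\theta)(2+2\cos k\theta)=1$.
\end{itemize}
I will classify the solutions using the theory of short vanishing sums of roots of unity (Conway--Jones / Lam--Leung). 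Every such relation with at most five terms decomposes into rotated regular $p$-gon relations with $p\in\{2,3,5\}$. This should force $\omega$ to have order $3$ (resp. $2$, or $4$ for $A_4$, resp. $5$), with the stated congruence on $k$ mod that order. For example, with $\lambda=-2$ and $\omega$ of order $5$, the product is $1$ exactly when $\{\theta,k\theta\}$ uses both of $\cos(2\pi/5)$ and $\cos(4\pi/5)$, i.e. $k\equiv\pm2$.

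I expect the main obstacle to be twofold. The first is the completeness of this case analysis of the vanishing sums, including degenerate cancellations. The second is separating $A_3$ from $A_4$, which share $\lambda=-1$. Both give $\omega=-1$ with $k$ odd, so the spectral condition alone cannot produce $4\mid n$. For $A_4$ I will add a combinatorial argument. Color $1$ is an independent set in which every color-$2$ vertex has exactly one color-$1$ neighbour, so it is a perfect code of density $1/4$, and hence $|V|=2n$ is divisible by $4$. To get $4\mid n$ I will use the structure of the coloring. From the first identity and the $\omega=-1$ analysis, $x$ and $y$ are supported with alternating parity, and the outer cycle pattern must be $x=(1,0,0,0)$-periodic, since a color-$1$ vertex $a_i$ forces $a_{i\pm1}$ to be color $2$ and then forces $y$ locally. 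Propagating along the $n$-cycle yields period $4$, hence $4\mid n$.
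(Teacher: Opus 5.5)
Your nonexistence strategy has a genuine gap. Fourier analysis on all of $\mathbb Z_n$ only gives the necessary condition that the determinant vanishes at \emph{some} $\omega\neq 1$ with $\omega^n=1$. This condition is strictly weaker than the theorem, so the outcome you predict for the root classification is false.
\begin{itemize}
\item For $A_5$ ($\lambda=-2$), take $\omega=e^{2\pi i/3}$. Then $(2+\omega+\omega^{-1})(2+\omega^{k}+\omega^{-k})=1\cdot 1=1$ whenever $3\nmid k$. So $GP(15,1)$ passes both your spectral test and the density test ($\sum_i x_i=\sum_i y_i=6$), although $k\not\equiv\pm2\pmod 5$.
\item For $A_2$ ($\lambda=0$), take $\omega=e^{i\pi/5}$ and $k\equiv\pm2\pmod{10}$. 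Then $4\cos(\pi/5)\cos(2\pi/5)=1$, and your five-term relation is just the rotated pentagon $\omega+\omega^3+\omega^5+\omega^7+\omega^9=0$, which the Conway--Jones classification will return. So $GP(30,12)$ passes although $3\mid k$.
\end{itemize}
Hence no classification of the roots, however complete, can prove the theorem. Also, for $\lambda=-1,-2$ the relations have $8$ and (with multiplicity) $15$ terms, outside the range you quote.

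The missing idea is to combine integrality with $p$-power frequencies only. Write $n=p^sm$ with $p\nmid m$, and fold: $X_r=\sum_j x_{r+jp^s}$, $Y_r=\sum_j y_{r+jp^s}$ for $r\in\mathbb Z_{p^s}$. This keeps exactly the Fourier coefficients at the $p^s$-th roots of unity. The folded sequences satisfy your system on $\mathbb Z_{p^s}$ with $\mu$ replaced by $\mu m$. Next, show the determinant is nonzero at every nontrivial $p^s$-th root of unity for the excluded classes of $k$. The paper does this by comparing $\Phi_{p^t}\equiv(x-1)^{\varphi(p^t)}\pmod p$ with the root at $x=1$ of exact multiplicity $2$ of the reduced polynomial. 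Then $X_r=Y_r=\mu m/(3-\lambda)$, that is, $2m/3$, $m/2$, $m/4$, $2m/5$ for $A_2,\dots,A_5$. None of these is an integer, which is the contradiction. Zeros of the determinant at other orders, like the ones above, become irrelevant.

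There are also concrete errors elsewhere.
\begin{itemize}
\item For $A_4$ no propagation argument is needed. Summing the two identities separately gives $3X+Y=X+3Y=n$ for the totals $X=\sum_i x_i$, $Y=\sum_i y_i$. So $X=Y=n/4$ and $4\mid n$ at once, whereas your perfect-code count only gives $4\mid 2n$.
\item Your claim that $x$ and $y$ have alternating-parity supports is refuted by your own $A_4$ coloring, where both live on even indices. Its spectrum sits at $\omega=\pm1,\pm i$, and $\omega=\pm i$ also solves the $\lambda=-1$ equation for odd $k$.
\item Your $A_3$ construction fails. With $k$ odd, $x_i=[i\text{ even}]$, $y_i=[i\text{ odd}]$ is the bipartition and realizes $A_6$: each $a_{2j}$ has all three neighbours of the other color. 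Take $x_i=y_i=[i\text{ even}]$ instead.
\item Your $A_5$ construction fails. Your color-$1$ class has $3n/5$ vertices, whereas $3|C_1|=2|C_2|$ forces $|C_1|=4n/5$; indeed $a_1$ has only one color-$1$ neighbour, and no shift repairs this. The correct lift of the Petersen pattern is $x_i=[i\equiv\pm1\pmod 5]$, $y_i=[i\equiv\pm2\pmod 5]$, which works for $k\equiv 2$ and $k\equiv -2$ alike.
\item Your $A_2$ pattern realizes $A_2$ only after interchanging the colors, which is allowed.
\end{itemize}
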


\begin{corollary}\label{cor:cr-codes}
Let \(C\) be a nonempty proper subset of \(V(GP(n,k))\). Then \(C\) is a
completely regular code of covering radius one if and only if the partition
\(\{C,V(GP(n,k))\setminus C\}\) has one of the quotient matrices listed in
Theorem~\ref{thm:main-classification}, up to interchanging the two cells. Equivalently, the possible intersection arrays \(\{b;c\}\) of such codes are
as follows:
\[
\begin{array}{c|c}
\text{intersection array} & \text{condition}\\ \hline
\{1;1\} & \text{all } n,k\\
\{1;2\},\{2;1\} & 3\mid n,\; 3\nmid k\\
\{2;2\} & 2\mid n,\; k \text{ odd}\\
\{3;1\},\{1;3\} & 4\mid n,\; k \text{ odd}\\
\{3;2\},\{2;3\} & 5\mid n,\; k\equiv\pm2\pmod 5\\
\{3;3\} & 2\mid n,\; k \text{ odd}.
\end{array}
\]
\end{corollary}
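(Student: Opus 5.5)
The corollary is mostly a translation of Theorem~\ref{thm:main-classification}, so the plan is to set up the dictionary between covering-radius-one completely regular codes and perfect \(2\)-colorings, and then read off the table.

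\emph{The two notions coincide.} Let \(C\) be a nonempty proper subset. Since \(GP(n,k)\) is connected and \(C\neq V\), the covering radius \(\rho\) of \(C\) is at least \(1\). If \(\rho=1\), the distance partition of \(V\) with respect to \(C\) is exactly \(\{C, V\setminus C\}\). By definition, \(C\) is completely regular precisely when this partition is equitable, that is, a perfect \(2\)-coloring.

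Conversely, suppose \(\{C,V\setminus C\}\) is equitable with quotient matrix \(S=(s_{ij})\). Connectivity and nontriviality force \(s_{21}>0\). Hence every vertex outside \(C\) has a neighbour in \(C\), so \(\rho=1\), and the distance partition is again \(\{C,V\setminus C\}\), which is equitable. Thus the first assertion holds: the admissible quotient matrices are exactly those allowed by Theorem~\ref{thm:main-classification}. Interchanging the two cells corresponds to replacing \(C\) by its complement, which is also completely regular of covering radius one.

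\emph{Reading the intersection array off the quotient matrix.} For covering radius one, the intersection array \(\{b_0;c_1\}\) has \(b_0=s_{12}\), the number of neighbours outside \(C\) of a vertex of \(C\), and \(c_1=s_{21}\), the number of neighbours in \(C\) of a vertex outside \(C\). So each matrix \(A_i\) with \(C\) as the first cell gives the array \(\{s_{12};s_{21}\}\), and putting \(C\) as the second cell gives the reversed array.

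Going through \eqref{eq:six-matrices}:
\begin{itemize}
\item \(A_1\) gives \(\{1;1\}\).
\item \(A_2\) gives \(\{1;2\}\) and \(\{2;1\}\).
\item \(A_3\) gives \(\{2;2\}\).
\item \(A_4\) gives \(\{3;1\}\) and \(\{1;3\}\).
\item \(A_5\) gives \(\{3;2\}\) and \(\{2;3\}\).
\item \(A_6\) gives \(\{3;3\}\).
\end{itemize}
The existence condition for each array is copied from the corresponding item of Theorem~\ref{thm:main-classification}. This works because distinct matrices give distinct unordered pairs \(\{s_{12},s_{21}\}\), as listed before \eqref{eq:six-matrices}. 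Hence each array determines its matrix uniquely up to swapping the colors, and the conditions transfer exactly.

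The only step needing any care is the converse direction: checking that an equitable two-cell partition really has covering radius one and not more. Connectivity handles this, via \(s_{21}>0\). Everything else is bookkeeping that rests on the main theorem.
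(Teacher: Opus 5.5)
Your proposal is correct and follows essentially the same route as the paper. You identify covering-radius-one completely regular codes with two-cell equitable partitions, deriving \(s_{21}>0\) directly from connectivity where the paper reads it off the positive off-diagonal entries of the listed matrices, and then read the array \(\{s_{12};s_{21}\}\) off \(A_1,\ldots,A_6\) with \(C\) in either cell, making explicit (as the paper leaves implicit) that the off-diagonal pair determines the matrix so the existence conditions transfer exactly.
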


\begin{proof}
If \(C\) is a completely regular code of covering radius one, then its distance
partition is
\[
   \{C,V(GP(n,k))\setminus C\}.
\]
By definition this partition is equitable, and hence gives a perfect
\(2\)-coloring. Conversely, any perfect \(2\)-coloring with two nonempty color
classes gives an equitable partition of the form
\[
   \{C,V(GP(n,k))\setminus C\}.
\]
Since each quotient matrix listed in Theorem~\ref{thm:main-classification} has
positive off-diagonal entries, every vertex outside \(C\) has a neighbour in
\(C\). Thus the covering radius is one.

\noindent For a two-cell equitable partition with quotient matrix
\[
   \begin{pmatrix}
      a & b\\
      c & d
   \end{pmatrix},
\]
the corresponding intersection array of the first cell is \(\{b;c\}\). Reading
the off-diagonal entries of the matrices in
Theorem~\ref{thm:main-classification}, and allowing the two cells to be
interchanged, gives exactly the table displayed above.
\end{proof}

\begin{lemma}[A circulant system]\label{lem:circulant-system}
Let \(q\geq 2\), and let \(\alpha\) and \(k\) be integers. Let
\((U_r)_{r\in\mathbb Z_q}\) and \((V_r)_{r\in\mathbb Z_q}\) be two sequences of
complex numbers satisfying
\[
   U_{r-1}+\alpha U_r+U_{r+1}+V_r=0,
   \qquad r\in\mathbb Z_q,
\]
and
\[
   U_r+V_{r-k}+\alpha V_r+V_{r+k}=0,
   \qquad r\in\mathbb Z_q.
\]
Assume that, for every \(q\)-th root of unity \(\xi\), $(\alpha+\xi+\xi^{-1})(\alpha+\xi^k+\xi^{-k})\neq 1$.
Then, we have $U_r=V_r=0$, for all $r\in\mathbb Z_q$. 
\end{lemma}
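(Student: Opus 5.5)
We apply the discrete Fourier transform on \(\mathbb Z_q\): a system invariant under translation becomes diagonal, one \(2\times 2\) block for each \(q\)-th root of unity. Fix a \(q\)-th root of unity \(\xi\) and set
\[
   \widehat U(\xi)=\sum_{r\in\mathbb Z_q} U_r\,\xi^{-r},\qquad
   \widehat V(\xi)=\sum_{r\in\mathbb Z_q} V_r\,\xi^{-r}.
\]
Shifting indices, which is legitimate because they are read modulo \(q\) and \(\xi^q=1\), gives
\[
   \sum_r U_{r\pm1}\xi^{-r}=\xi^{\pm1}\widehat U(\xi)
   \qquad\text{and}\qquad
   \sum_r V_{r\pm k}\xi^{-r}=\xi^{\pm k}\widehat V(\xi).
\]
Since \(k\) is an arbitrary integer, \(\xi^{k}\) is understood via \(k \bmod q\), which is harmless.

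Multiply the first family of equations by \(\xi^{-r}\) and sum over \(r\); then do the same with the second family. This yields the \(2\times2\) homogeneous system
\[
   (\alpha+\xi+\xi^{-1})\,\widehat U+\widehat V=0,\qquad
   \widehat U+(\alpha+\xi^{k}+\xi^{-k})\,\widehat V=0.
\]
Its determinant is \((\alpha+\xi+\xi^{-1})(\alpha+\xi^k+\xi^{-k})-1\), which is nonzero by hypothesis. Hence \(\widehat U(\xi)=\widehat V(\xi)=0\) for every \(q\)-th root of unity \(\xi\).

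Finally, the Fourier transform on \(\mathbb Z_q\) is invertible: \(U_r=\frac1q\sum_{\xi^q=1}\widehat U(\xi)\xi^{r}\), by orthogonality of characters (\(\sum_{\xi}\xi^{m}=q\) if \(q\mid m\) and \(0\) otherwise). Therefore all \(U_r\) vanish, and likewise all \(V_r\).

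There is no real obstacle. The only points needing care are the correct sign convention in the shift identities and the observation that \(\xi^{-k}=\overline{\xi^{k}}\), so each diagonal coefficient is \(\alpha+2\cos(2\pi jk/q)\). That observation is not needed here, but it will be the useful form when verifying the hypothesis in applications. The case \(q\mid k\) needs no separate treatment, since then \(\xi^{\pm k}=1\) and the same computation applies.
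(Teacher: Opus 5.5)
Your proposal is correct and follows essentially the same route as the paper. Both apply the discrete Fourier transform on $\mathbb Z_q$ to reduce the system to one $2\times 2$ homogeneous system per $q$-th root of unity, use the determinant hypothesis to force $\widehat U=\widehat V=0$, and then invert the transform; the only difference is that you index by $\xi$ directly rather than by $j$ with $\xi=\omega^j$.
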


\begin{proof}
Let $\omega=e^{2\pi i/q}$. For \(j\in\mathbb Z_q\), put
\[
   \widehat U(j)=\sum_{r\in\mathbb Z_q} U_r\omega^{-jr},
   \qquad
   \widehat V(j)=\sum_{r\in\mathbb Z_q} V_r\omega^{-jr}.
\]
Multiplying the first equation by \(\omega^{-jr}\) and summing over
\(r\in\mathbb Z_q\), we get
\[
   (\alpha+\omega^j+\omega^{-j})\widehat U(j)+\widehat V(j)=0.
\]
Similarly, the second equation gives
\[
   \widehat U(j)+
   (\alpha+\omega^{jk}+\omega^{-jk})\widehat V(j)=0.
\]
Thus, for every \(j\in\mathbb Z_q\), the pair
\((\widehat U(j),\widehat V(j))\) is a solution of the linear system
\[
\begin{pmatrix}
\alpha+\omega^j+\omega^{-j} & 1\\
1 & \alpha+\omega^{jk}+\omega^{-jk}
\end{pmatrix}
\binom{\widehat U(j)}{\widehat V(j)}
=
\binom{0}{0}.
\]
By the assumption, the determinant of this matrix is nonzero. Hence
\[
   \widehat U(j)=\widehat V(j)=0,
   \qquad j\in\mathbb Z_q.
\]
Since the discrete Fourier transform on \(\mathbb Z_q\) is invertible, it follows that
\[
   U_r=V_r=0,
   \qquad r\in\mathbb Z_q.
\]
\end{proof}
\begin{lemma}[Non-vanishing of the determinants]
\label{lem:root-nonzero}
The determinant condition in Lemma~\ref{lem:circulant-system} holds in the
following cases.
\begin{enumerate}
\item If \(q=2^s\), \(s\geq 1\), \(2\mid k\), and \(\xi^q=1\), then
\[
   (1+\xi+\xi^{-1})(1+\xi^k+\xi^{-k})\neq 1.
\]

\item If \(q=5^s\), \(s\geq 1\), \(k\equiv 0,\pm1\pmod 5\), and
\(\xi^q=1\), then
\[
   (2+\xi+\xi^{-1})(2+\xi^k+\xi^{-k})\neq 1.
\]

\item If \(q=3^s\), \(s\geq 1\), \(3\mid k\), and \(\xi^q=1\), then
\[
   (\xi+\xi^{-1})(\xi^k+\xi^{-k})\neq 1.
\]
\end{enumerate}
\end{lemma}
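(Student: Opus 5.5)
Throughout, I would work with the real numbers \(x=\xi+\xi^{-1}=2\cos\theta\) and \(y=\xi^k+\xi^{-k}\), where \(\xi=e^{i\theta}\) is a \(q\)-th root of unity. Both lie in \([-2,2]\), and \(y=T_k(x)\), where \(T_k\) is the \(k\)-th Chebyshev-type polynomial defined by \(T_k(2\cos\theta)=2\cos k\theta\). Assuming the product equals \(1\), I aim for a contradiction. Two tools handle all three parts. The first is a size/parity argument when \(\xi\) has small order. The second, for general \(\xi\), is algebraic: the relation gives \(x\) as a root of an integer polynomial, and I compare it with the field \(\mathbb Q(\xi+\xi^{-1})\), where \(\xi\) has prime-power order \(p^t\).

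Part 3 (\(q=3^s\), \(3\mid k\)). If \(\xi=1\) the product is \(4\neq1\). Otherwise \(\xi\) has order \(3^t\) with \(t\ge1\), so \(\xi^k\) has order \(3^{t-1}\) or smaller. Rather than work in \(\mathbb Z[\xi]\), I take norms from \(\mathbb Q(\xi)\) down to \(\mathbb Q\). The element \(\xi^2+1=\xi(\xi+\xi^{-1})\) divides \(\xi^6-1\). A cleaner route is to note that \(x\) is a unit times \(1+\xi^2\), and \(1+\xi^2=(1-\xi^4)/(1-\xi^2)\) is a unit, because \(\xi^2\) and \(\xi^4\) both have order \(3^t\). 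So \(x\) is a unit. The equation then forces \(y=x^{-1}\), which is also a unit. For \(t=1\), \(y=2\) is not a unit, a contradiction. For \(t\ge2\), \(\xi^k\) still has order a power of \(3\) exceeding \(1\) only when \(t\ge2\), so I need a different invariant. I would reduce modulo the prime \(\pi=1-\xi\) above \(3\). Since \(\xi\equiv1\), we get \(x\equiv2\) and \(y\equiv2\), hence \(xy\equiv4\equiv1\pmod\pi\). This shows that reduction alone fails here. I would therefore reduce modulo \(\pi^2\) or use the norm: \(N(xy)=1\) is automatic, so the argument must instead use that \(y\) lies in the subfield of order \(3^{t-1}\) while \(x^{-1}\) generates the full field \(\mathbb Q(\xi+\xi^{-1})\) of degree \(3^{t-1}\). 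This gives a contradiction once \(t\ge2\), since \(y\) lies in a proper subfield (when \(t\ge2\) the degrees are \(3^{t-1}\) versus at most \(3^{t-2}\)). The case \(t=1\) was handled by the explicit value \(y=2\).

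Part 1 (\(q=2^s\), \(2\mid k\)). The same degree argument applies. Set \(x'=1+x\), which generates \(\mathbb Q(\xi+\xi^{-1})\) when the order \(2^t\) satisfies \(t\ge3\). Then \(y'=1+y\) lies in the field for \(\xi^k\), of order at most \(2^{t-1}\), which is a proper subfield. Since \(x'=1/y'\), this is a contradiction. The small orders \(\xi\in\{1,-1,\pm i\}\) I check directly: they give \(x\in\{2,-2,0\}\), and with \(k\) even, \(y\in\{2,2,\pm2\}\). The products are \(3\cdot3=9\), \((-1)\cdot3=-3\), and \(1\cdot3\) or \(1\cdot(-1)\), none of which equals \(1\).

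Part 2 (\(q=5^s\), \(k\equiv0,\pm1\pmod5\)). Again \(x'=2+x\) has full degree \(5^{t-1}\cdot2\) over \(\mathbb Q\) when \(\xi\) has order \(5^t\). If \(5\mid k\), the subfield argument applies for \(t\ge1\), and \(\xi^k=1\) when \(t=1\) gives \(y'=4\) with \(x'=1/4\), which is not an algebraic integer, a contradiction. If \(k\equiv\pm1\pmod5\), then \(\xi^k\) is a Galois conjugate \(\sigma(\xi)\) with \(\sigma\equiv\pm1\) on \(5\)-th roots. I would use reduction modulo the prime above \(5\) and the order-\(5\) quotient: map to the fifth root \(\zeta=\xi^{5^{t-1}}\). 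The cleaner route is to take the norm to \(\mathbb Q(\sqrt5)\) or compute directly. For \(t=1\), \(y=x\), so \((2+x)^2=1\), giving \(x=-1\) or \(x=-3\). But \(x=2\cos(2\pi j/5)\in\{2,(-1\pm\sqrt5)/2\}\), so there is no solution. For \(t\ge2\), \(\xi^k\) has the same order, so \(x'\) and \(y'\) are conjugate units, and I would argue modulo \((1-\xi)\): \(x'\equiv4\equiv-1\), \(y'\equiv-1\), and the product \(\equiv1\), again uninformative. The main obstacle is this case (and the analogous \(t\ge2\) cases), where \(x'\) and \(y'\) are Galois conjugate and the degree argument fails. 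There I would use absolute values. We have \(|x'y'|=1\) for every embedding, so \(|2+2\cos\theta|\,|2+2\cos k\theta|=1\) for all conjugates \(\theta=2\pi j/5^t\), \(\gcd(j,5)=1\). Choosing \(j\) with \(\theta\) small (say \(j=1\)) gives \(x'\approx4\), which forces \(y'\approx1/4\), so \(\cos k\theta\approx-7/8\). Summing \(\log|x'|\) over all conjugates and comparing with the known norm \(N(2+\xi+\xi^{-1})=N(1+\xi)^2=1\) should not immediately contradict this. I would therefore try an averaging or Mahler-measure argument using the fact that \(k\equiv\pm1\) makes the map \(j\mapsto kj\) fix cosets modulo \(5\).
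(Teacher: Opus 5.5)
Your subfield/degree argument is correct wherever $\xi^k$ lies in a proper subfield of $\mathbb Q(\xi)$. That covers part (1) ($k$ even, order $2^t$ with $t\ge3$), part (3) ($3\mid k$, $t\ge2$), and part (2) when $5\mid k$. Together with your direct checks of the small orders, this is a valid and rather conceptual alternative to the paper. (In part (2) you also need $\xi=1$, where the product is $16$.) The paper instead clears denominators to get $F\in\mathbb Z[x]$. It then shows that $x=1$ is a root of multiplicity exactly $2$ of $\overline F\in\mathbb F_p[x]$, which is incompatible with $\Phi_{p^t}\equiv(x-1)^{\varphi(p^t)}\pmod p$.

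The genuine gap is part (2) with $k\equiv\pm1\pmod 5$ and $\xi$ of order $5^t$, $t\ge2$. Here $\gcd(k,5)=1$, so $\xi\mapsto\xi^k$ is a Galois automorphism and $2+\xi^k+\xi^{-k}$ is a conjugate of $2+\xi+\xi^{-1}$. No degree comparison is possible, and reduction modulo $\pi=1-\xi$ is automatically consistent. The ``averaging or Mahler-measure argument'' is only announced, never carried out. This is not a marginal case: it is exactly what Theorem~\ref{thm:A5} needs whenever $25\mid n$ and $k\equiv\pm1\pmod 5$.

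The missing idea is to go deeper $\pi$-adically; working modulo $\pi^2$, which you mention in part (3), is still not enough. For $F_5(x)=(x+1)^2(x^k+1)^2-x^{k+1}$ one computes
$F_5(1+z)=15+15(k+1)z+c_2z^2+z^3g(z)$, with $g\in\mathbb Z[z]$ and $c_2\equiv 4-k^2\pmod 5$. Since $v_\pi(5)=\varphi(5^t)\ge4$, substituting $z=\xi-1$ gives $F_5(\xi)\equiv c_2(\xi-1)^2\not\equiv 0\pmod{\pi^3}$ whenever $k\equiv0,\pm1\pmod 5$. This is precisely the paper's multiplicity-two computation, and it handles all $t\ge1$ uniformly, without splitting according to whether $\xi^k$ is conjugate to $\xi$.

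Alternatively, your analytic route closes with one more conjugate. If $F_5(\xi)=0$ for one primitive $5^t$-th root, it holds for all of them, so take $\theta=2\pi/5^t$ and use both $\xi=e^{i\theta}$ and $\xi^2$. For $t\ge2$, the relation $(2+2\cos\theta)(2+2\cos k\theta)=1$ gives $\cos k\theta\in(-0.875,-0.873]$. The relation $(2+2\cos2\theta)(2+2\cos2k\theta)=1$ gives $\cos2k\theta\in(-0.875,-0.866]$. But $2\cos^2k\theta-1\in[0.524,0.532)$, a contradiction. This uses only $t\ge2$, not the congruence on $k$, and the case $t=1$ you already settled directly.
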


\begin{proof}
We shall use the following elementary fact. Let \(p\) be a prime and let
\(t\geq 1\). Denote by \(\Phi_{p^t}(x)\) the \(p^t\)-th cyclotomic polynomial,
and let \(\varphi\) be Euler's totient function. Recall that
\[
   \varphi(p^t)=p^{t-1}(p-1).
\]
If \(\zeta\) is a primitive \(p^t\)-th root of unity and
\(F(x)\in\mathbb Z[x]\) satisfies \(F(\zeta)=0\), then
\[
   \Phi_{p^t}(x)\mid F(x).
\]
Indeed, \(\Phi_{p^t}(x)\) is the minimal polynomial of \(\zeta\) over
\(\mathbb Q\). Moreover,
\[
   \Phi_{p^t}(x)=\frac{x^{p^t}-1}{x^{p^{t-1}}-1},
\]
and hence, after reducing modulo \(p\),
\[
   \Phi_{p^t}(x)\equiv (x-1)^{p^t-p^{t-1}}
   =(x-1)^{\varphi(p^t)} \pmod p.
\]
Consequently, if \(F(\zeta)=0\), then
\[
   (x-1)^{\varphi(p^t)}\mid \overline{F}(x)
   \quad \text{in } \mathbb F_p[x],
\]
where \(\overline{F}\) denotes the reduction of \(F\) modulo \(p\).

\noindent We prove \((1)\). Suppose, to the contrary, that
\[
   (1+\xi+\xi^{-1})(1+\xi^k+\xi^{-k})=1.
\]
Multiplying by \(\xi^{k+1}\), we obtain \(F_2(\xi)=0\), where
\[
   F_2(x)=(x^2+x+1)(x^{2k}+x^k+1)-x^{k+1}.
\]
The roots of orders \(1\), \(2\), and \(4\) give no solution. Indeed, for
\(\xi=1\) the left-hand side is \(9\); for \(\xi=-1\), since \(k\) is even, it
is \(-3\); and if \(\xi^2=-1\), then \(\xi+\xi^{-1}=0\), and the left-hand
side is either \(3\) or \(-1\). Thus \(\xi\) has order \(2^t\) with \(t\geq 3\). Since \(2\mid k\), a direct expansion at \(x=1\) gives
\[
   F_2(1+z)\equiv z^2+z^3h_2(z) \pmod 2
\]
for some \(h_2(z)\in\mathbb F_2[z]\). Equivalently,
\[
   \overline{F_2}(1+z)=z^2+z^3h_2(z)
   \quad \text{in } \mathbb F_2[z].
\]
Hence the reduction \(\overline{F_2}\in\mathbb F_2[x]\) has \(x=1\) as a root
of multiplicity exactly \(2\). On the other hand,
\(\Phi_{2^t}(x)\mid F_2(x)\) would imply that
\[
   (x-1)^{\varphi(2^t)}=(x-1)^{2^{t-1}}
\]
divides \(\overline{F_2}(x)\) in \(\mathbb F_2[x]\). Since \(t\geq 3\), we have
\(2^{t-1}\geq 4\), a contradiction. This proves \((1)\).

\noindent We prove \((2)\). Suppose that
\[
   (2+\xi+\xi^{-1})(2+\xi^k+\xi^{-k})=1.
\]
Multiplying by \(\xi^{k+1}\), we get \(F_5(\xi)=0\), where
\[
   F_5(x)=(x+1)^2(x^k+1)^2-x^{k+1}.
\]
For \(\xi=1\), the left-hand side is \(16\), not \(1\). Hence \(\xi\) has
order \(5^t\) for some \(t\geq 1\). Expanding at \(x=1\), we obtain
\[
   F_5(1+z)\equiv (4-k^2)z^2+z^3h_5(z) \pmod 5
\]
for some \(h_5(z)\in\mathbb F_5[z]\). Equivalently,
\[
   \overline{F_5}(1+z)=(4-k^2)z^2+z^3h_5(z)
   \quad \text{in } \mathbb F_5[z].
\]
Since \(k\equiv 0,\pm1\pmod 5\), we have
\[
   4-k^2\not\equiv 0\pmod 5.
\]
Thus the reduction \(\overline{F_5}\in\mathbb F_5[x]\) has \(x=1\) as a root
of multiplicity exactly \(2\). But \(\Phi_{5^t}(x)\mid F_5(x)\) would imply
that
\[
   (x-1)^{\varphi(5^t)}
\]
divides \(\overline{F_5}(x)\) in \(\mathbb F_5[x]\). Since
\(\varphi(5^t)=4\cdot 5^{t-1}\geq 4\), this is impossible. Therefore \((2)\) holds.

\noindent Finally, we prove \((3)\). Suppose that
\[
   (\xi+\xi^{-1})(\xi^k+\xi^{-k})=1.
\]
Multiplying by \(\xi^{k+1}\), we get \(F_3(\xi)=0\), where
\[
   F_3(x)=(x^2+1)(x^{2k}+1)-x^{k+1}.
\]
For \(\xi=1\), the left-hand side is \(4\), not \(1\). If \(\xi\) has order
\(3\), then $\xi+\xi^{-1}=-1.$
Moreover, since \(3\mid k\), we have \(\xi^k=\xi^{-k}=1\), and so
\[
   (\xi+\xi^{-1})(\xi^k+\xi^{-k})=-2\neq 1.
\]
Thus \(\xi\) has order \(3^t\) with \(t\geq 2\). Since \(3\mid k\), expansion at \(x=1\) gives
\[
   F_3(1+z)\equiv 2z^2+z^3h_3(z) \pmod 3
\]
for some \(h_3(z)\in\mathbb F_3[z]\). Equivalently,
\[
   \overline{F_3}(1+z)=2z^2+z^3h_3(z)
   \quad \text{in } \mathbb F_3[z].
\]
Therefore the reduction \(\overline{F_3}\in\mathbb F_3[x]\) has \(x=1\) as a
root of multiplicity exactly \(2\). But \(\Phi_{3^t}(x)\mid F_3(x)\) would
imply that
\[
   (x-1)^{\varphi(3^t)}
\]
divides \(\overline{F_3}(x)\) in \(\mathbb F_3[x]\). Since \(t\geq 2\), we have
\(\varphi(3^t)=2\cdot 3^{t-1}\geq 6\), a contradiction. This proves \((3)\).
\end{proof}
\begin{corollary}[Uniformity of cyclic systems]
\label{cor:cyclic-uniformity}
All subscripts in this corollary are taken modulo \(q\).

\begin{enumerate}
\item Let \(q=2^s\), \(s\geq 1\), and let \(k\) be even. Suppose that
\((X_r)_{r\in\mathbb Z_q}\) and \((Y_r)_{r\in\mathbb Z_q}\) are two sequences
of real numbers satisfying
\[
   X_{r-1}+X_r+X_{r+1}+Y_r=\lambda,
   \qquad r\in\mathbb Z_q,
\]
and
\[
   X_r+Y_{r-k}+Y_r+Y_{r+k}=\lambda,
   \qquad r\in\mathbb Z_q.
\]
Then
\[
   X_r=Y_r=\frac{\lambda}{4},
   \qquad r\in\mathbb Z_q.
\]

\item Let \(q=5^s\), \(s\geq 1\), and let \(k\equiv 0,\pm1\pmod 5\).
Suppose that
\((X_r)_{r\in\mathbb Z_q}\) and \((Y_r)_{r\in\mathbb Z_q}\) are two sequences
of real numbers satisfying
\[
   X_{r-1}+2X_r+X_{r+1}+Y_r=\lambda,
   \qquad r\in\mathbb Z_q,
\]
and
\[
   X_r+Y_{r-k}+2Y_r+Y_{r+k}=\lambda,
   \qquad r\in\mathbb Z_q.
\]
Then
\[
   X_r=Y_r=\frac{\lambda}{5},
   \qquad r\in\mathbb Z_q.
\]

\item Let \(q=3^s\), \(s\geq 1\), and let \(k\) be divisible by \(3\).
Suppose that
\((S_r)_{r\in\mathbb Z_q}\) is a sequence of real numbers satisfying
\[
   S_{r-k-1}+S_{r-k+1}+S_{r+k-1}+S_{r+k+1}
   =
   S_r+\lambda,
   \qquad r\in\mathbb Z_q.
\]
Then
\[
   S_r=\frac{\lambda}{3},
   \qquad r\in\mathbb Z_q.
\]
\end{enumerate}

In particular, if the sequences are integer-valued and \(\lambda\) is an
integer, then \(\lambda\) is divisible by \(4\), \(5\), and \(3\) in
\((1)\), \((2)\), and \((3)\), respectively.
\end{corollary}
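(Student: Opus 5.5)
Each part reduces to Lemma~\ref{lem:circulant-system} with the appropriate \(\alpha\) and \(q\), using Lemma~\ref{lem:root-nonzero} to verify the determinant hypothesis. The idea is to subtract off the obvious constant solution and show that the remainder satisfies the homogeneous system.

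For \((1)\), put \(U_r=X_r-\lambda/4\) and \(V_r=Y_r-\lambda/4\). A constant solution \(X=Y=c\) of the first system satisfies \(4c=\lambda\), so \((U,V)\) satisfies
\[
U_{r-1}+U_r+U_{r+1}+V_r=0,\qquad U_r+V_{r-k}+V_r+V_{r+k}=0,
\]
which is the system of Lemma~\ref{lem:circulant-system} with \(\alpha=1\). Lemma~\ref{lem:root-nonzero}(1) gives \((1+\xi+\xi^{-1})(1+\xi^k+\xi^{-k})\neq1\) for every \(q\)-th root of unity \(\xi\). Hence \(U=V=0\).

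Part \((2)\) is identical with \(\alpha=2\). A constant solution now satisfies \(5c=\lambda\), so we subtract \(\lambda/5\). The determinant hypothesis is Lemma~\ref{lem:root-nonzero}(2).

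Part \((3)\) needs more care, because it involves a single sequence with a four-term stencil \(r\pm k\pm1\). The plan is to factor this stencil. Its symbol is
\[
\xi^{-k-1}+\xi^{-k+1}+\xi^{k-1}+\xi^{k+1}=(\xi+\xi^{-1})(\xi^k+\xi^{-k}).
\]
The constant solution is \(S_r=\lambda/3\), since \(4c=c+\lambda\) gives \(c=\lambda/3\). Set \(U_r=S_r-\lambda/3\), so that
\[
U_{r-k-1}+U_{r-k+1}+U_{r+k-1}+U_{r+k+1}=U_r .
\]
Define \(V_r=-(U_{r-1}+U_{r+1})\). Then the first equation of Lemma~\ref{lem:circulant-system} holds with \(\alpha=0\).

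The second equation requires
\[
U_r+V_{r-k}+V_{r+k}=0.
\]
Substituting the definition of \(V\) gives
\[
U_r-(U_{r-k-1}+U_{r-k+1}+U_{r+k-1}+U_{r+k+1})=0,
\]
which is exactly the homogeneous equation above. Lemma~\ref{lem:root-nonzero}(3) supplies the determinant hypothesis \((\xi+\xi^{-1})(\xi^k+\xi^{-k})\neq1\). Hence \(U=0\), that is, \(S_r=\lambda/3\).

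One could instead apply the Fourier transform to \(S\) directly: the transform of \(U\) is multiplied by \((\omega^j+\omega^{-j})(\omega^{jk}+\omega^{-jk})-1\), which is nonzero. The reduction above avoids repeating that argument.

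For the final claim, suppose the sequences are integer-valued and \(\lambda\) is an integer. Then \(\lambda/4\), \(\lambda/5\) and \(\lambda/3\) are integers, since each equals some \(X_r\) or \(S_r\). This gives the stated divisibility.

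The only step that is not routine is choosing the auxiliary sequence \(V\) in part \((3)\) so that the single equation fits the two-equation template. Once the stencil is seen to factor as above, the choice \(V_r=-(U_{r-1}+U_{r+1})\) is natural. The remaining risk is the sign conventions, which are settled by the direct substitution above.
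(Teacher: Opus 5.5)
Your proposal is correct and matches the paper's proof: you subtract the constant solutions \(\lambda/4\), \(\lambda/5\), \(\lambda/3\), then apply Lemma~\ref{lem:circulant-system} with \(\alpha=1,2,0\), checking the determinant condition via Lemma~\ref{lem:root-nonzero}. In part \((3)\) you introduce the same auxiliary sequence \(V_r=-(U_{r-1}+U_{r+1})\) as the paper to fit the two-equation template, and your divisibility conclusion is drawn the same way.
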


\begin{proof}
We prove the three parts separately. For \((1)\), put
\[
   U_r=X_r-\frac{\lambda}{4},
   \qquad
   V_r=Y_r-\frac{\lambda}{4},
   \qquad r\in\mathbb Z_q.
\]
Then $U_{r-1}+U_r+U_{r+1}+V_r=0$
and $U_r+V_{r-k}+V_r+V_{r+k}=0$,
for every \(r\in\mathbb Z_q\). By Lemma~\ref{lem:circulant-system}, with
\(\alpha=1\), and Lemma~\ref{lem:root-nonzero}\((1)\), we have $U_r=V_r=0$, for all $r\in\mathbb Z_q$.
Hence $X_r=Y_r=\frac{\lambda}{4}$,
for all $r\in\mathbb Z_q$. For \((2)\), put
\[
   U_r=X_r-\frac{\lambda}{5},
   \qquad
   V_r=Y_r-\frac{\lambda}{5},
   \qquad r\in\mathbb Z_q.
\]
Then
$U_{r-1}+2U_r+U_{r+1}+V_r=0$
and
$U_r+V_{r-k}+2V_r+V_{r+k}=0$,
for every \(r\in\mathbb Z_q\). By Lemma~\ref{lem:circulant-system}, with
\(\alpha=2\), and Lemma~\ref{lem:root-nonzero}\((2)\), we get
$U_r=V_r=0$ for every $r\in\mathbb Z_q$. Thus
$X_r=Y_r=\frac{\lambda}{5}$ for every $r\in\mathbb Z_q$. For \((3)\), put
\[
   U_r=S_r-\frac{\lambda}{3},
   \qquad r\in\mathbb Z_q.
\]
The assumed equation becomes
\[
   U_{r-k-1}+U_{r-k+1}+U_{r+k-1}+U_{r+k+1}=U_r,
   \qquad r\in\mathbb Z_q.
\]
Define $V_r:=-(U_{r-1}+U_{r+1})$ for every $r\in\mathbb Z_q$. Then
$U_{r-1}+U_{r+1}+V_r=0$ for every $r\in\mathbb Z_q$.
Moreover,
\[
\begin{aligned}
   U_r+V_{r-k}+V_{r+k}
   &=
   U_r-(U_{r-k-1}+U_{r-k+1})
       -(U_{r+k-1}+U_{r+k+1})  \\
   &=0.
\end{aligned}
\]
Therefore the sequences \((U_r)\) and \((V_r)\) satisfy the system of
Lemma~\ref{lem:circulant-system} with \(\alpha=0\). By
Lemma~\ref{lem:root-nonzero}\((3)\), the determinant condition in
Lemma~\ref{lem:circulant-system} is satisfied. Hence
\[
   U_r=V_r=0,
   \qquad r\in\mathbb Z_q.
\]
In particular,
\[
   S_r=\frac{\lambda}{3},
   \qquad r\in\mathbb Z_q.
\]

\noindent The last assertion follows immediately from the displayed equalities.
\end{proof}

\section{The quotient matrix \(A_2\)}
\label{sec:A2}

In this section we determine when the matrix
\[
   A_2=
   \begin{pmatrix}
      2 & 1\\
      2 & 1
   \end{pmatrix}
\]
occurs as the quotient matrix of a perfect \(2\)-coloring of \(GP(n,k)\).

\begin{theorem}\label{thm:A2}
Let \(n\geq 3\) and \(1\leq k<n/2\). Then \(GP(n,k)\) has a perfect
\(2\)-coloring with quotient matrix
\[
   A_2=
   \begin{pmatrix}
      2 & 1\\
      2 & 1
   \end{pmatrix}
\]
if and only if
\[
   3\mid n
   \qquad\text{and}\qquad
   3\nmid k .
\]
\end{theorem}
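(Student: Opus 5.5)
The plan is to prove sufficiency by an explicit period-\(3\) coloring and necessity by counting plus Corollary~\ref{cor:cyclic-uniformity}\((3)\). The key observation is that in \(A_2\) both rows have second-to-first-column entry \(2\): \emph{every} vertex, whatever its color, has exactly two neighbours of color \(1\) (call color \(1\) black and color \(2\) white).

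\textbf{Sufficiency.} Assume \(3\mid n\) and \(3\nmid k\). Color \(a_i\) and \(b_i\) white if \(i\equiv 0\pmod 3\), and black otherwise. Since \(3\mid n\), this is well defined on \(\mathbb Z_n\). I check the four vertex types in turn.
\begin{enumerate}
\item \(a_0\) (white) has neighbours \(a_{\pm1}\), which are black, and \(b_0\), which is white.
\item \(b_0\) (white) has neighbour \(a_0\), which is white, and \(b_{\pm k}\), which are black because \(3\nmid k\).
\item For \(i\not\equiv 0\), the vertex \(a_i\) (black) has \(b_i\) black and exactly one of \(a_{i\pm1}\) white, since exactly one of \(i\pm1\) is \(\equiv 0\pmod 3\).
\item For \(i\not\equiv0\), the vertex \(b_i\) (black) has \(a_i\) black. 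Because \(k\equiv\pm1\pmod 3\), the residues \(i-k,i,i+k\) are all distinct mod \(3\), so exactly one of \(i\pm k\) is \(\equiv0\pmod 3\); hence exactly one of \(b_{i\pm k}\) is white.
\end{enumerate}
So the quotient matrix is \(A_2\).

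\textbf{Necessity, \(3\mid n\).} Count black–white edges. If there are \(x\) black and \(y\) white vertices, then \(x\cdot 1=y\cdot 2\). Hence \(y=2n/3\), which forces \(3\mid n\).

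\textbf{Necessity, \(3\nmid k\).} Suppose instead \(3\mid k\). Let \(x_i,y_i\in\{0,1\}\) be the black indicators of \(a_i,b_i\). The observation above gives, for all \(i\in\mathbb Z_n\),
\[
x_{i-1}+x_{i+1}+y_i=2,\qquad x_i+y_{i-k}+y_{i+k}=2.
\]
Substituting \(y_i=2-x_{i-1}-x_{i+1}\) into the second equation yields
\[
x_{i-k-1}+x_{i-k+1}+x_{i+k-1}+x_{i+k+1}=x_i+2.
\]
Write \(n=3^s m\) with \(s\geq1\) (by the previous step) and \(3\nmid m\), and put \(q=3^s\). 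Since \(q\mid n\), reduction mod \(q\) is well defined on indices. Let \(S_r=\sum_{i\equiv r \pmod q} x_i\); each residue class contains \(m\) indices. Summing the displayed equation over the class \(i\equiv r\pmod q\) gives the system of Corollary~\ref{cor:cyclic-uniformity}\((3)\) with \(\lambda=2m\). That corollary then forces \(3\mid 2m\), contradicting \(3\nmid m\).

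The main point of care is this last aggregation step: the reduction from \(\mathbb Z_n\) to \(\mathbb Z_{3^s}\), which is needed because the corollary only handles prime-power moduli.
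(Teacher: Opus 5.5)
Your proof is correct and follows the paper's argument. Edge counting gives $3\mid n$; then, to exclude $3\mid k$, you substitute the outer-vertex equation into the inner one, aggregate modulo $q=3^s$, and apply Corollary~\ref{cor:cyclic-uniformity}(3). The differences are cosmetic: you verify the period-$3$ coloring directly where the paper cites an earlier construction, and you use the indicator of color $1$ (giving $\lambda=2m$) instead of color $2$ (giving $\lambda=m$), which is just the complementary substitution $x_i=1-\chi(a_i)$.
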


\begin{proof}
The existence for \(3\mid n\) and \(3\nmid k\) follows from the periodic
construction in \cite[Theorem~3.3]{AlaeiyanKarami2016GP}. We prove the
converse.

\noindent Suppose that \(T\) is a perfect \(2\)-coloring of \(GP(n,k)\) with quotient
matrix \(A_2\). Let
\[
   C_i=\{v\in V(GP(n,k)) : T(v)=i\},
   \qquad i=1,2.
\]
Counting the edges between \(C_1\) and \(C_2\) in two ways gives
\[
   |C_1|=2|C_2|.
\]
Since \(|C_1|+|C_2|=2n\), we obtain
\[
   |C_2|=\frac{2n}{3}.
\]
Hence \(3\mid n\).

It remains to show that \(3\nmid k\). Suppose, to the contrary, that
\(3\mid k\). Write
\[
   n=3^s m,
   \qquad s\geq 1,
   \qquad 3\nmid m,
\]
and put \(q=3^s\). Let
\[
   \chi(v)=
   \begin{cases}
      1, & T(v)=2,\\
      0, & T(v)=1.
   \end{cases}
\]
Since every vertex has exactly one neighbour of color \(2\), we have, for every
\(i\in \mathbb Z_n\),
\[
   \chi(a_{i-1})+\chi(a_{i+1})+\chi(b_i)=1
\]
and
\[
   \chi(a_i)+\chi(b_{i-k})+\chi(b_{i+k})=1.
\]
From the first equation,
\[
   \chi(b_i)=1-\chi(a_{i-1})-\chi(a_{i+1}).
\]
Substituting this into the second equation gives
\[
   \chi(a_{i-k-1})+\chi(a_{i-k+1})
   +\chi(a_{i+k-1})+\chi(a_{i+k+1})
   =
   \chi(a_i)+1
\]
for all \(i\in\mathbb Z_n\). For \(r\in\mathbb Z_q\), define
\[
   S_r=\sum_{j=0}^{m-1}\chi(a_{r+jq}).
\]
Summing the last recurrence over all indices \(i=r+jq\), \(j=0,1,\ldots,m-1\),
we obtain
\[
   S_{r-k-1}+S_{r-k+1}+S_{r+k-1}+S_{r+k+1}=S_r+m,
   \qquad r\in\mathbb Z_q.
\]
Here the subscripts are read modulo \(q\). Since \(q=3^s\) and \(3\mid k\),
Corollary~\ref{cor:cyclic-uniformity}\((3)\), with \(\lambda=m\), gives
\[
   S_r=\frac{m}{3},
   \qquad r\in\mathbb Z_q.
\]
This is impossible, because each \(S_r\) is an integer while \(3\nmid m\).
Therefore \(3\nmid k\).
\end{proof}

\section{The quotient matrix \(A_3\)}
\label{sec:A3}

The case \(k=3\) was treated in \cite[Theorem~3.3]{Karami2022GPn3}. We prove
the general statement.

\begin{theorem}\label{thm:A3}
Let \(n\geq 3\) and \(1\leq k<n/2\). Then \(GP(n,k)\) has a perfect
\(2\)-coloring with quotient matrix
\[
   A_3=
   \begin{pmatrix}
      1 & 2\\
      2 & 1
   \end{pmatrix}
\]
if and only if
\[
   2\mid n
   \qquad\text{and}\qquad
   k \text{ is odd}.
\]
\end{theorem}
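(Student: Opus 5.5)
The plan is to handle existence by an explicit parity coloring, and nonexistence by a counting argument when \(n\) is odd and by Corollary~\ref{cor:cyclic-uniformity}\((1)\) when \(n\) is even and \(k\) is even.

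\textbf{Existence.} Suppose \(2\mid n\) and \(k\) is odd. Since \(n\) is even, the parity of an index \(i\in\mathbb Z_n\) is well defined. Put
\[
   T(a_i)=T(b_i)=
   \begin{cases}1,& i \text{ even},\\ 2,& i\text{ odd}.\end{cases}
\]
For the outer vertex \(a_i\), its neighbours \(a_{i\pm1}\) have the opposite parity and \(b_i\) has the same color. So \(a_i\) has one neighbour of its own color and two of the other color. For the inner vertex \(b_i\), its neighbour \(a_i\) has the same color. Because \(k\) is odd, \(b_{i\pm k}\) have the opposite color. So \(b_i\) also has one neighbour of its own color and two of the other. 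Hence the quotient matrix is \(A_3\).

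\textbf{Nonexistence: setup and \(n\) odd.} Let \(T\) be a perfect coloring with matrix \(A_3\), and let \(\chi\) be the indicator of color \(2\). A vertex \(v\) has exactly \(2-\chi(v)\) neighbours of color \(2\). This gives, for all \(i\in\mathbb Z_n\),
\[
   \chi(a_{i-1})+\chi(a_i)+\chi(a_{i+1})+\chi(b_i)=2,\qquad
   \chi(a_i)+\chi(b_{i-k})+\chi(b_i)+\chi(b_{i+k})=2 .
\]
Let \(A=\sum_i\chi(a_i)\) and \(B=\sum_i\chi(b_i)\). Summing the two equations over \(i\) gives \(3A+B=2n\) and \(A+3B=2n\). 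Therefore \(A=B=n/2\), which forces \(2\mid n\).

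\textbf{Nonexistence: \(n\) even and \(k\) even.} It remains to exclude this case, and this is the main step. Write \(n=2^s m\) with \(s\ge1\) and \(m\) odd, and put \(q=2^s\). Fold the sequences as
\[
   X_r=\sum_{j=0}^{m-1}\chi(a_{r+jq}),\qquad Y_r=\sum_{j=0}^{m-1}\chi(b_{r+jq}),\qquad r\in\mathbb Z_q .
\]
Since \(q\mid n\), the index shifts by \(\pm1\) and \(\pm k\) are compatible with reduction modulo \(q\), exactly as in the proof of Theorem~\ref{thm:A2}. Summing the two displayed equations over the indices \(i\equiv r\pmod q\) shows that \((X_r)\) and \((Y_r)\) satisfy the hypotheses of Corollary~\ref{cor:cyclic-uniformity}\((1)\) with \(\lambda=2m\). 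Since \(k\) is even, that corollary gives \(X_r=2m/4=m/2\). This is not an integer because \(m\) is odd, a contradiction. Hence \(k\) is odd, which completes the characterization.
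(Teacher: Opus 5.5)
Your proposal is correct and follows the paper's proof essentially step by step. It uses the same parity coloring for existence, the same closed-neighbourhood count giving \(\sum_i\chi(a_i)=\sum_i\chi(b_i)=n/2\), and the same \(2\)-adic folding into Corollary~\ref{cor:cyclic-uniformity}\((1)\) with \(\lambda=2m\). Your use of the indicator of color \(2\) instead of color \(1\) is immaterial: \(A_3\) is symmetric under swapping the colors, so you get identical equations.
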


\begin{proof}
Suppose first that \(2\mid n\) and \(k\) is odd. Define
\(T:V(GP(n,k))\to\{1,2\}\) by
\[
   T(a_{2i})=T(b_{2i})=1,
   \qquad
   T(a_{2i+1})=T(b_{2i+1})=2 .
\]
This coloring is well-defined because \(n\) is even. Since \(k\) is odd, the
vertices \(b_i\) and \(b_{i+k}\) have opposite colors. It follows immediately
that every vertex has exactly one neighbour of its own color and two neighbours
of the other color. Thus \(T\) is a perfect \(2\)-coloring with quotient matrix
\(A_3\).

\noindent Conversely, suppose that \(T\) is a perfect \(2\)-coloring of \(GP(n,k)\) with
quotient matrix \(A_3\). Put
\[
   x_i=
   \begin{cases}
      1, & T(a_i)=1,\\
      0, & T(a_i)=2,
   \end{cases}
   \qquad
   y_i=
   \begin{cases}
      1, & T(b_i)=1,\\
      0, & T(b_i)=2.
   \end{cases}
\]
All indices are read modulo \(n\). For the matrix \(A_3\), every closed neighbourhood contains exactly two
vertices of color \(1\). Hence, for every \(i\in\mathbb Z_n\),
\[
   x_{i-1}+x_i+x_{i+1}+y_i=2
\]
and
\[
   x_i+y_{i-k}+y_i+y_{i+k}=2.
\]
Summing these two equations over all \(i\in\mathbb Z_n\), we obtain
\[
   3\sum_{i\in\mathbb Z_n}x_i+\sum_{i\in\mathbb Z_n}y_i=2n
\]
and
\[
   \sum_{i\in\mathbb Z_n}x_i+3\sum_{i\in\mathbb Z_n}y_i=2n.
\]
Therefore
\[
   \sum_{i\in\mathbb Z_n}x_i=
   \sum_{i\in\mathbb Z_n}y_i=
   \frac n2.
\]
In particular, \(n\) is even. It remains to prove that \(k\) is odd. Suppose, to the contrary, that \(k\) is
even. Write
\[
   n=2^s m,
   \qquad s\geq 1,
   \qquad m \text{ odd},
\]
and put \(q=2^s\). For \(r\in\mathbb Z_q\), define
\[
   X_r=\sum_{j=0}^{m-1}x_{r+jq},
   \qquad
   Y_r=\sum_{j=0}^{m-1}y_{r+jq}.
\]
Summing the two displayed equations over all indices \(i=r+jq\), we get
\[
   X_{r-1}+X_r+X_{r+1}+Y_r=2m,
   \qquad r\in\mathbb Z_q,
\]
and
\[
   X_r+Y_{r-k}+Y_r+Y_{r+k}=2m,
   \qquad r\in\mathbb Z_q.
\]
By Corollary~\ref{cor:cyclic-uniformity}\((1)\), with \(\lambda=2m\),
\[
   X_r=Y_r=\frac m2,
   \qquad r\in\mathbb Z_q.
\]
This is impossible, since \(X_r\) and \(Y_r\) are integers and \(m\) is odd.
Thus \(k\) is odd.
\end{proof}
\section{The quotient matrix \(A_4\)}
\label{sec:A4}

For \(k=3\), this case was obtained in \cite[Theorem~3.4]{Karami2022GPn3}.
The following proof gives the general case.

\begin{theorem}\label{thm:A4}
Let \(n\geq 3\) and \(1\leq k<n/2\). Then \(GP(n,k)\) has a perfect
\(2\)-coloring with quotient matrix
\[
   A_4=
   \begin{pmatrix}
      0 & 3\\
      1 & 2
   \end{pmatrix}
\]
if and only if $4\mid n$ and \(k\) is odd.
\end{theorem}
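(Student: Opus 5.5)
The plan has two parts. For necessity I will use a counting argument to get $4\mid n$, then the $2$-adic Fourier machinery of Corollary~\ref{cor:cyclic-uniformity}(1) to force $k$ odd. For sufficiency I will give an explicit period-$4$ coloring.

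\emph{Necessity.} Suppose $T$ is a perfect $2$-coloring with quotient matrix $A_4$, and let $x_i,y_i$ be the indicators of $T(a_i)=1$ and $T(b_i)=1$. Vertices of color $1$ have no neighbour of color $1$, and vertices of color $2$ have exactly one. Hence every closed neighbourhood contains exactly one vertex of color $1$, so for all $i\in\mathbb Z_n$,
\[
x_{i-1}+x_i+x_{i+1}+y_i=1,\qquad x_i+y_{i-k}+y_i+y_{i+k}=1 .
\]
Summing over $i$ gives $3\sum x_i+\sum y_i=n$ and $\sum x_i+3\sum y_i=n$. Therefore $\sum x_i=\sum y_i=n/4$, and so $4\mid n$.

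Now suppose, for a contradiction, that $k$ is even. Write $n=2^s m$ with $m$ odd, and put $q=2^s$. Define $X_r=\sum_j x_{r+jq}$ and $Y_r=\sum_j y_{r+jq}$ exactly as in the proof of Theorem~\ref{thm:A3}. Summing the two identities over the residue class $r\bmod q$ gives the system of Corollary~\ref{cor:cyclic-uniformity}(1) with $\lambda=m$. That corollary yields $X_r=m/4$. This is not an integer, since $m$ is odd, which is a contradiction. So $k$ is odd.

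\emph{Sufficiency.} Assume $4\mid n$ and $k$ odd. Give color $1$ to the vertices $a_i$ with $i\equiv 0\pmod 4$ and $b_i$ with $i\equiv 2\pmod 4$; give color $2$ to all other vertices. This is well defined because $4\mid n$. By the reformulation above, it suffices to check that every closed neighbourhood contains exactly one vertex of color $1$.

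For an outer vertex $a_i$ this is a case check on $i\bmod 4$:
\begin{itemize}
\item $i\equiv 0$: the unique color-$1$ vertex is $a_i$;
\item $i\equiv 1$: it is $a_{i-1}$;
\item $i\equiv 3$: it is $a_{i+1}$;
\item $i\equiv 2$: it is $b_i$.
\end{itemize}
For an inner vertex $b_i$, its closed neighbourhood is $\{a_i,b_{i-k},b_i,b_{i+k}\}$.
\begin{itemize}
\item $i$ even: $i\pm k$ are odd, so $b_{i\pm k}$ have color $2$. Exactly one of $a_i,b_i$ has color $1$, according to $i\equiv 0$ or $2\pmod 4$.
\item $i$ odd: $a_i$ and $b_i$ have color $2$. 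The indices $i\pm k$ are even and differ by $2k\equiv 2\pmod 4$, so exactly one of $b_{i\pm k}$ has color $1$.
\end{itemize}
Hence the coloring is perfect with quotient matrix $A_4$.

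Once Corollary~\ref{cor:cyclic-uniformity}(1) is available there is no real obstacle. The only points needing care are the closed-neighbourhood reformulation, which must encode the row $(0,3)$ correctly, and the final parity check in the construction for $b_i$ with $i$ odd.
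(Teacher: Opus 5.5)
Your proof is correct and matches the paper's argument: the same closed-neighbourhood reformulation and counting give $4\mid n$, the same $2$-adic reduction to Corollary~\ref{cor:cyclic-uniformity}(1) with $\lambda=m$ rules out even $k$, and the same period-$4$ coloring handles sufficiency. Your explicit check for inner vertices $b_i$ with $i$ odd, using $2k\equiv 2\pmod 4$, supplies the step that the paper leaves as a ``direct check modulo 4''.
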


\begin{proof}
Suppose first that \(4\mid n\) and \(k\) is odd. Write \(n=4m\). Define
\(T:V(GP(n,k))\to\{1,2\}\) by
\[
   T(a_{4i})=T(b_{4i+2})=1,
   \qquad i=0,1,\ldots,m-1,
\]
and assign color \(2\) to all remaining vertices. We check the coloring modulo \(4\). For a vertex \(a_i\), the closed
neighbourhood is
\[
   \{a_{i-1},a_i,a_{i+1},b_i\}.
\]
It contains exactly one vertex of color \(1\): namely \(a_i\) if
\(i\equiv 0\pmod 4\), \(a_{i-1}\) if \(i\equiv 1\pmod 4\), \(b_i\) if
\(i\equiv 2\pmod 4\), and \(a_{i+1}\) if \(i\equiv 3\pmod 4\). For a vertex \(b_i\), the closed neighbourhood is
\[
   \{a_i,b_{i-k},b_i,b_{i+k}\}.
\]
Since \(k\) is odd, a direct check modulo \(4\) again shows that this set
contains exactly one vertex of color \(1\). Hence every vertex of color \(1\)
has no neighbour of color \(1\), and every vertex of color \(2\) has exactly one
neighbour of color \(1\). Thus \(T\) is a perfect \(2\)-coloring with quotient
matrix \(A_4\).

\noindent Conversely, suppose that \(T\) is a perfect \(2\)-coloring of \(GP(n,k)\) with
quotient matrix \(A_4\). Put
\[
   x_i=
   \begin{cases}
      1, & T(a_i)=1,\\
      0, & T(a_i)=2,
   \end{cases}
   \qquad
   y_i=
   \begin{cases}
      1, & T(b_i)=1,\\
      0, & T(b_i)=2.
   \end{cases}
\]
All indices are read modulo \(n\). For the matrix \(A_4\), every closed neighbourhood contains exactly one vertex
of color \(1\). Thus, for every \(i\in\mathbb Z_n\),
\[
   x_{i-1}+x_i+x_{i+1}+y_i=1
\]
and
\[
   x_i+y_{i-k}+y_i+y_{i+k}=1.
\]
Summing over all \(i\in\mathbb Z_n\), we get
\[
   3\sum_{i\in\mathbb Z_n}x_i+\sum_{i\in\mathbb Z_n}y_i=n
\]
and
\[
   \sum_{i\in\mathbb Z_n}x_i+3\sum_{i\in\mathbb Z_n}y_i=n.
\]
Therefore
\[
   \sum_{i\in\mathbb Z_n}x_i=
   \sum_{i\in\mathbb Z_n}y_i=
   \frac n4.
\]
Hence \(4\mid n\).

\noindent It remains to prove that \(k\) is odd. Suppose, to the contrary, that \(k\) is
even. Write
\[
   n=2^s m,
   \qquad s\geq 2,
   \qquad m \text{ odd},
\]
and put \(q=2^s\). For \(r\in\mathbb Z_q\), define
\[
   X_r=\sum_{j=0}^{m-1}x_{r+jq},
   \qquad
   Y_r=\sum_{j=0}^{m-1}y_{r+jq}.
\]
Summing the two displayed equations over all indices \(i=r+jq\), we obtain
\[
   X_{r-1}+X_r+X_{r+1}+Y_r=m,
   \qquad r\in\mathbb Z_q,
\]
and
\[
   X_r+Y_{r-k}+Y_r+Y_{r+k}=m,
   \qquad r\in\mathbb Z_q.
\]
By Corollary~\ref{cor:cyclic-uniformity}\((1)\), with \(\lambda=m\),
\[
   X_r=Y_r=\frac m4,
   \qquad r\in\mathbb Z_q.
\]
This is impossible, since \(X_r\) and \(Y_r\) are integers and \(m\) is odd.
Thus \(k\) is odd.
\end{proof}
\section{The quotient matrix \(A_5\)}
\label{sec:A5}

The period-five construction used below is the same construction that appears in
\cite[Theorem~3.5]{Karami2022GPn3}. We include it for completeness.

\begin{theorem}\label{thm:A5}
Let \(n\geq 3\) and \(1\leq k<n/2\). Then \(GP(n,k)\) has a perfect
\(2\)-coloring with quotient matrix
\[
   A_5=
   \begin{pmatrix}
      0 & 3\\
      2 & 1
   \end{pmatrix}
\]
if and only if $5\mid n$ and $k\equiv \pm2 \pmod 5$.
\end{theorem}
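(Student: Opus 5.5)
The proof has two directions: a period-five construction for existence, and a counting argument plus Corollary~\ref{cor:cyclic-uniformity}\((2)\) for necessity.

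\emph{Existence.} Assume \(5\mid n\) and \(k\equiv\pm2\pmod 5\). Color \(a_i\) with color \(1\) when \(i\equiv 0\pmod 5\), and color \(b_i\) with color \(1\) when \(i\equiv \epsilon\pmod 5\). The residue \(\epsilon\) is chosen so that no \(b\)-vertex of color \(1\) is adjacent to another one; since \(k\equiv\pm2\), the inner neighbours \(b_{i\pm k}\) lie in residues \(i\pm2\). All other vertices get color \(2\).

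Color \(1\) must be independent, every color-\(2\) vertex must have exactly two color-\(1\) neighbours, and \(|C_1|=2n/5\) (see below). So we need \(\epsilon\neq 0\), since otherwise \(a_0b_0\) is a monochromatic edge of color \(1\). The neighbours of \(a_i\) are \(a_{i\pm1}\) and \(b_i\). For \(i\equiv\pm1\) the vertex \(a_i\) already has one color-\(1\) neighbour \(a_0\). For \(i\equiv\pm2\) it has none on the outer cycle. Hence every color-\(2\) outer vertex needs more, which suggests this naive pattern may need adjusting: the density of color \(1\) among the \(a\)'s is \(1/5\), but each outer color-\(2\) vertex needs two color-\(1\) neighbours.

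I would therefore instead search for a pattern with period \(5\) in which \(x\) and \(y\) each have exactly two ones among the residues. For example, take \(x\) with ones at residues \(\{0,2\}\) and \(y\) with ones at \(\{\ldots\}\), and verify the conditions \(x_{i-1}+2x_i+x_{i+1}+y_i=2\) and \(x_i+y_{i-k}+2y_i+y_{i+k}=2\) for all residues \(i\). These are the closed-neighbourhood counts weighted by \(2\) on the centre. Since \(k\equiv\pm2\), the inner condition depends only on \(y\) at residues \(i,i\pm2\). A finite check over residues mod \(5\) then fixes the pattern (the one from \cite[Theorem~3.5]{Karami2022GPn3}). For instance \(x\) supported at \(\{0,2\}\) and \(y\) at \(\{3,4\}\)... the precise choice is found by direct trial.

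\emph{Necessity.} Let \(C_1,C_2\) be the color classes. Counting edges between them gives \(3|C_1|=2|C_2|\), so \(|C_1|=2n/5\) and \(5\mid n\). Let \(x_i,y_i\) be the indicators of color \(1\) on \(a_i,b_i\). For a color-\(1\) vertex, the sum \(2\chi(v)+\sum_{u\sim v}\chi(u)\) equals \(2+0=2\). For a color-\(2\) vertex it equals \(0+2=2\). This gives the two systems
\[
x_{i-1}+2x_i+x_{i+1}+y_i=2,\qquad x_i+y_{i-k}+2y_i+y_{i+k}=2 .
\]
Suppose \(k\equiv 0,\pm1\pmod 5\). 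Write \(n=5^s m\) with \(s\ge 1\) and \(5\nmid m\), put \(q=5^s\), and fold along residues mod \(q\): set \(X_r=\sum_j x_{r+jq}\) and \(Y_r=\sum_j y_{r+jq}\). The folded sequences satisfy the system of Corollary~\ref{cor:cyclic-uniformity}\((2)\) with \(\lambda=2m\). Hence \(X_r=2m/5\), which is not an integer because \(5\nmid m\). This contradiction shows \(k\equiv\pm2\pmod 5\).

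\emph{Main obstacle.} Necessity is routine given Corollary~\ref{cor:cyclic-uniformity}\((2)\). The main work is pinning down an explicit period-\(5\) pattern for existence and verifying it for both signs \(k\equiv 2\) and \(k\equiv -2\); by the symmetry \(k\mapsto -k\), these give the same inner adjacency mod \(5\).
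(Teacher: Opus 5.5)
Your proof takes the paper's route. Necessity uses the weighted closed-neighbourhood equations, folds modulo \(q=5^s\), and applies Corollary~\ref{cor:cyclic-uniformity}\((2)\) with \(\lambda=2m\), exactly as the paper does. Existence uses a period-five pattern, as in the paper. Two repairs are needed.

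First, the count is off. The relations \(3|C_1|=2|C_2|\) and \(|C_1|+|C_2|=2n\) give \(|C_1|=4n/5\), not \(2n/5\); equivalently \(\sum x_i=\sum y_i=2n/5\). The conclusion \(5\mid n\) still holds. The correct density also tells you at once that \(x\) and \(y\) each need two ones per period, so your discarded first pattern was an artifact of this slip.

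Second, and more importantly, existence is left as ``found by direct trial'', which is not yet a proof. However, the candidate you wrote down does work: \(x\) supported on residues \(\{0,2\}\) and \(y\) on \(\{3,4\}\) modulo \(5\). For either sign of \(k\bmod 5\), the inner neighbours of \(b_i\) lie in residues \(i\pm 2\), and a term-by-term check gives
\[
\begin{array}{c|ccccc}
i & 0 & 1 & 2 & 3 & 4\\ \hline
x_{i-1}+2x_i+x_{i+1}+y_i & 0+2+0+0 & 1+0+1+0 & 0+2+0+0 & 1+0+0+1 & 0+0+1+1\\
x_i+y_{i-2}+2y_i+y_{i+2} & 1+1+0+0 & 0+1+0+1 & 1+0+0+1 & 0+0+2+0 & 0+0+2+0
\end{array}
\]
Every entry equals \(2\), so both families of equations hold. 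As you observed, these equations are equivalent to the \(A_5\) conditions, so this is a perfect coloring; it is well defined because \(5\mid n\). It is precisely the paper's coloring \(\{a_{5i+1},a_{5i+4},b_{5i+2},b_{5i+3}\}\) translated by one index. State the pattern outright together with this check, and the proof is complete.
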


\begin{proof}
Suppose first that \(5\mid n\) and \(k\equiv \pm2\pmod 5\). Write \(n=5m\).
Define \(T:V(GP(n,k))\to\{1,2\}\) by
\[
   T(a_{5i+1})=T(a_{5i+4})=T(b_{5i+2})=T(b_{5i+3})=1,
   \qquad i=0,1,\ldots,m-1,
\]
and assign color \(2\) to all remaining vertices. We check the coloring modulo \(5\). For an outer vertex \(a_i\), the vertices
\(a_{i-1},a_{i+1},b_i\) contain no vertex of color \(1\) when
\(i\equiv 1,4\pmod 5\), and contain exactly two vertices of color \(1\)
otherwise. For an inner vertex \(b_i\), since \(k\equiv\pm2\pmod 5\), the
vertices \(a_i,b_{i-k},b_{i+k}\) contain no vertex of color \(1\) when
\(i\equiv 2,3\pmod 5\), and contain exactly two vertices of color \(1\)
otherwise. Hence every vertex of color \(1\) has no neighbour of color \(1\),
and every vertex of color \(2\) has exactly two neighbours of color \(1\). Thus
\(T\) is a perfect \(2\)-coloring with quotient matrix \(A_5\).

\noindent Conversely, suppose that \(T\) is a perfect \(2\)-coloring of \(GP(n,k)\) with
quotient matrix \(A_5\). Put
\[
   x_i=
   \begin{cases}
      1, & T(a_i)=1,\\
      0, & T(a_i)=2,
   \end{cases}
   \qquad
   y_i=
   \begin{cases}
      1, & T(b_i)=1,\\
      0, & T(b_i)=2.
   \end{cases}
\]
All indices are read modulo \(n\). For the matrix \(A_5\), a vertex of color \(1\) has no neighbour of color \(1\),
whereas a vertex of color \(2\) has exactly two neighbours of color \(1\). Hence,
for every \(i\in\mathbb Z_n\),
\[
   x_{i-1}+2x_i+x_{i+1}+y_i=2
\]
and
\[
   x_i+y_{i-k}+2y_i+y_{i+k}=2.
\]
Summing over all \(i\in\mathbb Z_n\), we obtain
\[
   4\sum_{i\in\mathbb Z_n}x_i+\sum_{i\in\mathbb Z_n}y_i=2n
\]
and
\[
   \sum_{i\in\mathbb Z_n}x_i+4\sum_{i\in\mathbb Z_n}y_i=2n.
\]
Therefore
\[
   \sum_{i\in\mathbb Z_n}x_i=
   \sum_{i\in\mathbb Z_n}y_i=
   \frac{2n}{5}.
\]
Thus \(5\mid n\). It remains to exclude the congruence classes \(k\equiv 0,\pm1\pmod 5\).
Suppose, to the contrary, that
\[
   k\equiv 0,\pm1\pmod 5.
\]
Write
\[
   n=5^s m,
   \qquad s\geq 1,
   \qquad 5\nmid m,
\]
and put \(q=5^s\). For \(r\in\mathbb Z_q\), define
\[
   X_r=\sum_{j=0}^{m-1}x_{r+jq},
   \qquad
   Y_r=\sum_{j=0}^{m-1}y_{r+jq}.
\]
Summing the two displayed equations over all indices \(i=r+jq\), we get
\[
   X_{r-1}+2X_r+X_{r+1}+Y_r=2m,
   \qquad r\in\mathbb Z_q,
\]
and
\[
   X_r+Y_{r-k}+2Y_r+Y_{r+k}=2m,
   \qquad r\in\mathbb Z_q.
\]
By Corollary~\ref{cor:cyclic-uniformity}\((2)\), with \(\lambda=2m\),
\[
   X_r=Y_r=\frac{2m}{5},
   \qquad r\in\mathbb Z_q.
\]
This is impossible, since \(X_r\) and \(Y_r\) are integers and \(5\nmid m\).
Therefore \(k\not\equiv 0,\pm1\pmod 5\), and hence
\[
   k\equiv \pm2\pmod 5.
\]
\end{proof}

\begin{proof}[Proof of Theorem~\ref{thm:main-classification}]
The case \(A_1\) is obtained by coloring all vertices \(a_i\) with one color
and all vertices \(b_i\) with the other color. The case \(A_6\) is equivalent
to bipartiteness of \(GP(n,k)\), which holds if and only if \(n\) is even and
\(k\) is odd. The remaining cases \(A_2,A_3,A_4\), and \(A_5\) are exactly
Theorems~\ref{thm:A2}, \ref{thm:A3}, \ref{thm:A4}, and \ref{thm:A5}.
\end{proof}

\section{Concluding remark}

The classification above shows that, for generalized Petersen graphs, the
existence of completely regular codes of covering radius one is governed only
by simple congruence conditions on \(n\) and \(k\). The nonexistence part is
reduced to uniformity of cyclic systems, while all feasible cases are realized
by periodic colorings.

\section*{Declarations}

\textbf{Funding.}
The author received no funding for this work.

\noindent \textbf{Competing interests.}
The author declares that there are no competing interests.

\noindent\textbf{Author contributions.}
The author is solely responsible for all aspects of the work.

\noindent\textbf{Data availability.}
Data sharing is not applicable to this article as no datasets were generated or analyzed during the current study.
\bibliographystyle{plain}
\bibliography{mybib}
\end{document}